\theoremstyle{plain} %default
\newtheorem{lem}{Lemma}
\newtheorem{pro}{Proposition}
\newtheorem*{conj}{Conjecture}
\newtheorem{thm}{Theorem}
\theoremstyle{definition}
\theoremstyle{remark}
\numberwithin{equation}{section}
\newcommand{\set}[1]{\left\{#1\right\}}
\newcommand{\pd}{\,\partial}
\newcommand{\comment}[1]{}
\begin{document}

\title[]{Some comparison results on  equivalence groups}
\author[J C Ndogmo]{J C Ndogmo}

\address[J C Ndogmo]{School of Mathematics\\
University of the
Witwatersrand\\
Private Bag 3, Wits 2050\\
South Africa}
\email{jean-claude.ndogmo@wits.ac.za}

\begin{abstract} This paper deals with the comparison of two
common types of equivalence groups of differential equations, and this
gives rise to a number of results presented in the form
of theorems. It is shown in particular that one type can be identified with a subgroup of the other type. Consequences of this comparison related in particular
to the determination of invariant functions of the differential
equations are also discussed.
\end{abstract}
%

%\pacs{02.20.Tw, 02.30.Jr, 02.20.Sv}% insert suggested PACS numbers in braces on next %line

\keywords{Equivalence groups; Arbitrary functions; Symmetry
transformations; Relationships between groups; Levi decomposition}

\subjclass[2000]{34C20, 34C14, 76M60}

\maketitle %\maketitle must follow title, authors, abstract and \pacs

%\begin{widetext}
%$$\mbox{put long equation here}$$
%\end{widetext}

\section{Introduction}
\label{s:intro}

An invertible point transformation that maps every element in a
family ${\mathcal{F}}$ of differential equations of a specified form
into the same family is commonly referred to as an equivalence
transformation of the equation \cite{ovsi-bk, ibra-lap, waf-joint}.
Elements of the family ${\mathcal F}$ are generally labeled by a set
of arbitrary functions, and the set of all equivalence
transformations forms, in general, an infinite dimensional Lie group
called the equivalence group of $\mathcal{F}.$ One type of
equivalence transformations usually considered
\cite{ovsi-bk,ibratoris, zedan} is that in which the arbitrary
functions are also transformed. More specifically, if we denote by
$A= (A_1,\dots, A_m)$ the arbitrary functions specifying the family
element in $\mathcal{F},$ then for given independent variables
$x=(x^1, \dots, x^p)$ and dependent variable $y,$ this type of
equivalence transformations takes the form %%
\begin{subequations} \label{eq:eqv1}
\begin{align}
x &= \varphi (z, w, B) \\
y &= \psi (z, w, B) \\
A &= \zeta (z, w, B)
\end{align}
\end{subequations}
where $z= (z^1, \dots, z^p)$ is the new set of independent
variables, $w~=~w(z),$ is the new dependent variable, and $B= (B_1,
\dots, B_m)$ represents the new set of arbitrary  functions. The
original  arbitrary functions $A_i$ may be functions of $x,y,$ and
the derivatives of $y$ up to a certain order, although quite often
they arise naturally as functions of $x$ alone, and for the equivalence
transformations of the type \eqref{eq:eqv1},  the corresponding
equivalence group that we denote by $G_S$ is simply the symmetry
pseudo-group of the equation, in which the arbitrary functions  are
also considered as additional dependent variables.\par

 The other type of equivalence transformations
commonly considered \cite{forsyth, ibra-lap, ndogftc, schw-pap}
involves only the ordinary variables of the equation, i.e. the
independent and the dependent variables, and thus with the notation
already introduced, it consists of point transformations of the form
\begin{subequations} \label{eq:eqv2}
\begin{align}
x &= \varphi (z, w) \\
y &= \psi (z, w).
\end{align}
\end{subequations}
If we let $G$ denote the resulting equivalence group, then it
follows from a result of Lie \cite{lieGc}, that $G$ induces another
group of transformations $G_c$ acting on the arbitrary functions of
the equation. The invariants of the group $G_c$ are what are
referred to as the invariants of the family $\mathcal{F}$ of
differential equations. These invariant functions play a crucial
role in the classification and integrability of differential
equations \cite{forsyth, lag, brio,liouv,mrz-clhyper, ovsi-bk,
schw-bk}.\par

In the recent scientific literature, there has been a great deal of
interest for finding infinitesimal methods for the determination of
invariant functions of differential equations \cite{ibra-not,
ndogpla, ndogftc, melesh, ibra-lap}. Some of these methods consist
in finding the infinitesimal (generic) generator $X$ of $G_S,$ and
then use it in one way or another \cite{ndogftc,ibra-par} to obtain
the infinitesimal generator $X^0$ of $G_c,$ which gives the
determining equations for the invariant  functions. Most of these
methods remain computationally demanding and in some cases quite
inefficient, perhaps just because the connection between the three
groups $G, G_c$ and $G_S$ does not seem to have been fully
investigated.\par

We therefore present in this
paper a comparison of the groups $G$ and $G_S$  and show in
particular that $G$ can be identified with a subgroup of $G_S,$ and
we exhibit a case where the two groups are isomorphic.  As an
extension of some results obtained in a recent paper \cite{ndogftc} on the
relationship between the generators of $G_c$ and $G_S$ when $A=
A(x),$ we show that the generator $X$ of $G_S$ admits a simple
linear decomposition of the form $X= X^1 + X^2,$ where $X^1$ is just
the generator $X^0$ of $G_c$ in which one more component has been
added, and for a certain vector field $X^2,$ and we also give a very
simple and systematic method for extracting $X^1$ from $X.$ This
decomposition also turns out to be intimately associated with the
Lie algebraic structure of the equation, as we show that $X^1$ and
$X^2$ each generate  a Lie algebra, the two of which are closely
related to the components of the Levi decomposition of the Lie
algebra of $G_S.$ This decomposition results for $X$ which we prove
for a specific linear equation are stated as a conjecture for a
general linear equation.

%%%%%%%%%%%%%%%%%%%%%%%%%%%%%%%%%%%%%%%%%%%%%%%%%%%%%%%%%%%%%%%%%%%%%%%%%%%
%%%%%%%%%%%%%%%%%%%%%%%%%%%%
%%%%%%%%%%%%%%%%%%%%%%%%%%%%%%%%%%%%%%%%%%%%%%%%%%%%%%%%%%%%%%%%%%%%%%%%%%%

\section{The relationship between $G$ and $G_S$}

We shall call Type I the equivalence transformations of the form
\eqref{eq:eqv2}, and Type II those of the form \eqref{eq:eqv1},
whose equivalence groups we have denoted by $G$ and $G_S,$
respectively. When the coordinates system in which a vector field is
expressed is clearly understood, it shall be represented only by its
components, so that  a vector field %%
\begin{equation*}
\omega = \xi \pd_x + \eta \pd_y + \phi \pd_A
\end{equation*}
shall be represented simply by $\omega = \set{\xi, \eta, \phi}.$ On
the other hand, for a vector $a=(a^1,\dots, a^n)$ representing a
subset of coordinates, the notation $f \pd_a$ shall mean %%
\begin{align*}
f \pd_a &= f^1 \pd_{a^1} +\dots + f^{\,n} \pd_{a^n},\\[-5mm]
\intertext{and \vspace{-5mm} }
f&= (f^1, \dots, f^{\,n}).
\end{align*}
Hence with the notation introduced in the previous section, we may
represent the generator $X$ of $G_S$ as
$$
X= \set{\xi, \eta, \phi} \equiv \xi \pd_x + \eta \pd_y + \phi \pd_A.
$$
Let $V= \set{\xi, \eta}$ be the projection of this generator into
the $(x,y)$-space, and $V^0= \set{\xi^0, \eta^{\,0}}$ the
infinitesimal generator of $G.$ Elements of $\mathcal{F}$ may be
thought of as differential equations of the form
\begin{equation}\label{eq:main}
\Delta(x, y_{(n)}; A_1, \dots, A_m)=0,
\end{equation}
where $y_{(n)}$ denotes $y$ and all its derivatives up to the order
$n.$ We have the following result.
\begin{thm}\mbox{$\quad$} \label{th:GinGS}
\begin{enumerate}
\item[(a)] The  group $G$ can be identified with a
subgroup of $G_S.$
\item[(b)] The component functions $\xi^0$ and $\eta^0$ are special
instances of the functions $\xi$ and $\eta,$ respectively.
\end{enumerate}
\end{thm}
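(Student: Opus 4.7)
The plan is to construct an explicit injective group homomorphism $\iota\colon G \hookrightarrow G_S$ whose image is a subgroup, thereby identifying $G$ with a subgroup of $G_S$, and then to read off part (b) from the infinitesimal version of this embedding. The guiding intuition is that Type I transformations are exactly Type II transformations whose $(x,y)$-components $\varphi,\psi$ do not depend on the new arbitrary functions $B$.

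For part (a), I would proceed as follows. Given $g \in G$ represented by $x = \varphi(z,w)$, $y = \psi(z,w)$, Lie's result cited in the introduction guarantees that $g$ induces a well-defined transformation $A \mapsto B$ of the arbitrary functions, and in particular a relation $A = \zeta(z,w,B)$ obtained by expressing the old $A_i$ through the new variables via the formulas for $x$ and $y$. Define $\iota(g)$ to be the Type II transformation with components $(\varphi,\psi,\zeta)$. By construction, $\iota(g)$ sends elements of $\mathcal F$ to elements of $\mathcal F$ with the $A_i$ treated as additional dependent variables, so $\iota(g) \in G_S$. The functoriality of Lie's construction will give $\iota(g_1 g_2) = \iota(g_1)\iota(g_2)$, and projecting onto the first two components recovers $g$ from $\iota(g)$, so $\iota$ is an injective homomorphism. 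Hence $\iota(G)$ is a subgroup of $G_S$ isomorphic to $G$.

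Part (b) should then follow by passing to the Lie algebra. The image $\iota(G)\subset G_S$ is characterized by the condition that its $(x,y)$-components are independent of $B$; infinitesimally this translates to saying that the corresponding generators have $(x,y)$-components $\xi,\eta$ independent of $A$. Since the infinitesimal generator $V^0 = \{\xi^0,\eta^0\}$ of $G$ has components depending only on $(x,y)$, it is identified, under the differential of $\iota$, with a $G_S$-generator whose first two components are precisely $\xi^0$ and $\eta^0$. In this sense $\xi^0$ and $\eta^0$ are the special instances of $\xi$ and $\eta$ obtained by imposing the vanishing conditions $\partial_A \xi = \partial_A \eta = 0$.

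The main obstacle I anticipate is verifying that $\iota(G)$ is genuinely closed under the group operations of $G_S$ --- that is, that the Type II extension is compatible with composition in $G$. This reduces to the functoriality of Lie's induced action on the arbitrary functions, namely that the map $A \mapsto B$ attached to a composition of Type I transformations coincides with the composition of the individual induced maps. Once this functoriality is in hand, everything else amounts to straightforward bookkeeping of the variables $x,y,A,z,w,B$.
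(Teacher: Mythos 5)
Your proposal follows essentially the same route as the paper: to each $(\varphi,\psi)\in G$ you attach the triple $(\varphi,\psi,\zeta)$, where $\zeta$ is the Lie-induced action on the arbitrary functions, observe that this is a symmetry of the equation once the $A_i$ are viewed as additional dependent variables (the paper does this via the inverse transformations \eqref{eq:eqv2i}--\eqref{eq:Gci}), and then read off part (b) by passing to the infinitesimal counterpart, whose first two components must be $\xi^0,\eta^0$. You are somewhat more careful than the paper in insisting that the embedding be an injective homomorphism; the compatibility with composition that you flag (functoriality of the induced action on the arbitrary functions) is indeed the only point needed to make the word ``subgroup'' precise, and it holds because the induced transformation of the coefficients is determined by the change of variables, so it composes as the changes of variables do. The paper leaves this implicit, so your treatment is, if anything, slightly more complete on part (a).

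One side claim in your part (b) is, however, false and worth correcting: the image $\iota(G)$ is \emph{not} characterized by the condition that the $(x,y)$-components of a $G_S$-generator be independent of $A$ (equivalently, $\xi^0,\eta^0$ are \emph{not} obtained by merely imposing $\partial_A\xi=\partial_A\eta=0$). The paper's own example \eqref{eq:e3nor} shows this: there the generic generator has $\xi=f$, $\eta=(k_1+f')y+g$, already independent of the coefficients $a^j$, yet the projection $\set{\xi,\eta}$ is a generator of $G$ only after imposing the further condition $g=0$; in particular $X^2$, with projection $\set{0,g}$, lies in $L_S$ but not in the image of $G$. This does not damage the theorem as stated, since part (b) only asserts that $\xi^0,\eta^0$ arise as particular instances of $\xi,\eta$ (which your main argument establishes), but the extra ``characterization'' should be dropped: identifying the correct minimal set $\Omega$ of conditions is precisely the nontrivial issue the rest of the paper addresses.
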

\begin{proof}
Suppose  that the action of $G_c$ induced by that of $G$ on the
arbitrary functions of the equation is given by the transformations
\begin{equation}\label{eq:Gc}
A_i= \gamma_i (z, w, B_1, \dots, B_m), \quad i=1, \dots, m.
\end{equation}
Then, since \eqref{eq:eqv2} leaves the equation invariant except for the arbitrary functions,  by also viewing the functions $A_i$ as dependent variables,
\eqref{eq:eqv2} together with \eqref{eq:Gc} constitute a symmetry
transformation of the equation. This is more easily seen if we
consider the inverse transformations of \eqref{eq:eqv2} which may be
put in the form
\begin{subequations}\label{eq:eqv2i}
\begin{align}
z &= \hat{\varphi} (x, y)\\
w &= \hat{\psi} (x,y).
\end{align}
\end{subequations}
If we now denote by
\begin{equation}\label{eq:Gci}
B_i= \hat{\gamma} (x,y, A_1, \dots, A_m), \quad i=1, \dots, m
\end{equation}
the resulting arbitrary functions in the transformed equation, it
follows that in terms of the new set of variables $z, w,$  and
$B_i,$ any element of $\mathcal{F}$ is locally invariant under
\eqref{eq:eqv2i} and \eqref{eq:Gci}, and this proves the first part
of the Theorem. The second part of the Theorem is an immediate
consequence of the first part, for we can associate with any element
$(\varphi, \psi)$ of $G$ a triplet $(\varphi, \psi, \gamma)$ in
$G_S,$ where $\gamma$ is the action in \eqref{eq:eqv2}induced by $(\varphi, \psi)$ on
the arbitrary functions of the equation. The result thus follows by
first recalling that $G_S$ has generic generator $X= \set{\xi, \eta,
\phi},$ and  by considering the infinitesimal counterpart of the
finite transformations $(\varphi, \psi, \gamma),$ which must be of
the form $\set{\xi^0, \eta^0, \zeta^0}$ for a certain function
$\zeta^0.$
\end{proof}

The second part of this theorem was differently obtained in
a recent paper \cite{ndogftc}, by observing that for any element $g= (\varphi,
\psi)$ in $G,$ if $\gamma$ denotes the corresponding induced action
on the arbitrary functions of the equation, then the set of
transformations $\set{g, \gamma^{\,-1}}$ constitutes a symmetry of
the equation in which the arbitrary functions $A= (A_1, \dots, A_m)$
are seen as additional dependent variables. However, Theorem
\ref{th:GinGS} extends this observation by showing the clear and
simple connection between the entire groups $G$ and $G_S,$ and thus gives a more
general way of obtaining symmetries of the equation from a given
element of $G.$\par

It is clear that one can get the generator $V^0= \set{\xi^0,
\eta^0}$ of $G$ by imposing on the projection $V=\set{\xi, \eta}$ of
$X$  the set of minimum conditions $\Omega$ that reduces it to the
infinitesimal generator of the equivalence group $G$ of
$\mathcal{F},$ so that $V^0= V_{\vert_{\Omega}}.$ It was also
observed (see Ref. ~\cite{ndogftc}) that in the case where $A$ is the
function of $x$ alone, if we let $\phi^0$ denote the resulting value
of $\phi$ when these minimum conditions are also imposed on
$X=\set{\xi, \eta, \phi},$ then the generator $X^0$ of $G_c$ can be
obtained by setting $X^0= \set{\xi^0, \phi^0}.$ However, the problem
that arises is that of finding the simplest and most systematic way
of extracting $\set{\xi^0, \eta^0, \phi^0}= X_{\vert_{\Omega}}$ from
$\set{\xi, \eta, \phi}.$ We shall look at the connection between
these two vectors by considering the case of a family of third order
linear ordinary differential equations (ODEs). To begin with, we
note that the coefficient $\phi^0$ is an $m$-component vector that
depends in general on $(p+1)+m$ variables, and  finding its
corresponding finite transformations by integrating the vector field
$\set{\xi^0, \eta^0, \phi^0}$ can be a very complicated task.
Fortunately, once the finite transformations of the generator $V^0$
of $G$ which are easier to find are known, we can easily obtain
those associated with $\phi^0$ using the following result.
\begin{lem}\label{le:phi0}
The finite transformations associated with the component $\phi^0$ of
$$X^1= \set{\xi^0, \eta^0, \phi^0}$$
are precisely given by the action \eqref{eq:Gci} of $G_c$ induced
by that of  \eqref{eq:eqv2i}.
\end{lem}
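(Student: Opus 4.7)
The plan is to reduce the lemma directly to Theorem~\ref{th:GinGS}. The key observation is that the prescription $X^1 = \set{\xi^0, \eta^0, \phi^0}$ amounts to restricting the generic generator $X$ of $G_S$ to its $G$-subgroup identified in that theorem, so integrating $X^1$ must reproduce the corresponding one-parameter subgroup of $G_S$, whose $A$-component is precisely the action \eqref{eq:Gci}.

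First I would exploit part (a) of Theorem~\ref{th:GinGS} to associate with any smooth one-parameter family $(\varphi_t, \psi_t)\in G$ the one-parameter family $(\varphi_t, \psi_t, \gamma_t)\in G_S$, where $\gamma_t$ is the action on the arbitrary functions induced by $(\varphi_t, \psi_t)$ through formula~\eqref{eq:Gci}. At the infinitesimal level this family produces a vector field of the form $\set{\xi^0, \eta^0, \zeta^0}$ for a certain function $\zeta^0$, exactly as noted in the proof of Theorem~\ref{th:GinGS}.

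Next I would argue that the minimum conditions $\Omega$ that cut $V$ down to $V^0$ are precisely those singling out the image of $G$ inside $G_S$; imposing them on the full generator $X$ must therefore yield the same vector field as the infinitesimal version of $(\varphi_t, \psi_t, \gamma_t)$. By uniqueness of the infinitesimal generator of a one-parameter subgroup this forces $\phi^0 = \zeta^0$, so the flow of $X^1$ reconstructs $(\varphi_t, \psi_t, \gamma_t)$ integrally. The $A$-component of that flow, read as functions of the starting point $(x,y,A)$, is by construction the transformation \eqref{eq:Gci}.

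The main obstacle I anticipate is making rigorous the identification of $\Omega$ with the defining conditions of the image of $G$ in $G_S$, since $\Omega$ is described in the text only implicitly as the ``minimum conditions'' reducing $V$ to $V^0$. I would handle this by observing that for any $\set{\xi^0, \eta^0, \chi}$ in the Lie algebra of $G_S$ whose $(x,y)$-projection already coincides with $V^0$, the component $\chi$ is uniquely determined by the symmetry criterion used in Theorem~\ref{th:GinGS}, namely that the arbitrary functions must transform so as to leave \eqref{eq:main} invariant. This pins $\phi^0$ down unambiguously and gives the stated identification of its finite transformations with \eqref{eq:Gci}.
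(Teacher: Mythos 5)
Your proposal is correct and follows essentially the same route as the paper: both arguments rest on the observation that once the $(x,y)$-part of the flow of $X^1$ is fixed to be an equivalence transformation of $G$ (via the conditions $\Omega$), invariance of the equation forces the transformation of the arbitrary functions to be the induced action \eqref{eq:Gci}. The only cosmetic difference is that you pin down the $A$-component by a uniqueness argument at the infinitesimal level and then integrate, whereas the paper states the same uniqueness directly at the level of the finite transformations; both versions rely on the same implicit non-degeneracy of the parametrization by the arbitrary functions.
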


\begin{proof}
Since $X^1= X_{\vert_{\Omega}},$ where $\Omega$ is the set of
minimum conditions to be imposed on $V=\set{\xi, \eta}$ to reduce it
into an infinitesimal generator $V^0\!~\!=\!~\!\set{\xi^0, \eta^0}$
of $G,$ it first follows that once the finite transformations
\eqref{eq:Gci} corresponding to $V^0$ are applied to the equation,
the resulting equation is invariant, except for the expressions of
the arbitrary functions which are now given by \eqref{eq:eqv2i}.
Thus if $(z, w, b)$ are the new variables generated by the symmetry
transformations  of  $X^1,$  where $b=(b_1, \dots, b_m),$ then the
only way to have an invariant equation is to set
$$
b_i= \hat{\gamma} (x,y, A_1, \dots, A_m), \quad i=1, \dots, m
$$
where $\hat{\gamma}$ is the same function appearing in
\eqref{eq:Gci}, and this readily proves the lemma.
\end{proof}

\section{Case of the general  third order linear ODE.}
\label{s:3rdode}
Consider the general linear ODE
\begin{equation}\label{eq:e3nor}
y^{(3)} + a^1 (x) y' + a^0 (x) y=0,
\end{equation}
which is said to be in its normal reduced form. Here, the arbitrary
functions $A_i$ of the previous section are simply the coefficients
$a^j$ of the equation. This form of the equation is in no way
restricted, for  any general linear third order ODE can be
transformed into \eqref{eq:e3nor} by a simple change of the
dependent variable \cite{schw-pap, ndogpla}. If we consider the
arbitrary functions $a^j$ as additional dependent variables, then by
applying known procedures for finding Lie point symmetries
\cite{olv-bk, bluman, olv-bk2}, the infinitesimal generator $X$ of
the symmetry group $G_S$ in the coordinates system $(x, y, a^1,
a^0)$ is found to be of the form
\begin{subequations}\label{eq:X3ode}
\begin{align}
X &= \set{f, (k_1 + f')y+ g, -2 \left(a^1 f' + f^{(3)}\right), C_4}\\
\intertext{where}
C_4 &= - \frac{1}{y} \left(  a^0 g + a^1 g' + g^{(3)} \right) -
\left( 3 a^0 f' + a^1 f'' + f^{(4)}\right),
\end{align}
\end{subequations}
and where $f,$ and $g$ are arbitrary functions of $x.$ The
projection of $X$ in the $(x,y)$-space is therefore
\begin{equation}\label{eq:V3ode}
V= \set{f, (k_1 + f')y+ g},
\end{equation}
and a simple observation of this expression shows that due to the
homogeneity of \eqref{eq:e3nor}, \eqref{eq:V3ode} may represent an
infinitesimal generator of the equivalence group $G$ only if $g=0.$
A search for the one-parameter subgroup $\exp (t W),$ satisfying
$\exp (t W) (x,y)= (z,w)$ and generated by the resulting reduced
vector field $W= \set{f, (k_1+ f')y}$ readily gives
\begin{align*}
\dot{z} &= f(z)\\
\dot{w} &=  \left(  k_1 + f'(z) \right) w,\\
\intertext{where}
\dot{z} &= d z/ dt, \qquad \dot{w} = d w/ dt.
\end{align*}
Integrating these last two equations while taking into account the
initial conditions gives
\begin{subequations}\label{eq:1sgp3}
\begin{align}
J(z) &= t+ J(x) \label{eq:1sgp3a}\\
w  &= e^{k_1 t} \frac{f(z)}{f(x)} y\\
\intertext{where}
 J(z) &= \int \frac{dz}{f(z).}
\end{align}
\end{subequations}
Differentiating both sides of \eqref{eq:1sgp3a} w.r.t. $x$ shows
that $d z/ dx= f(z)/f(x).$ Thus, if we assume that $z$ is explicitly
given by
$$
z= F_t (x) \equiv F(x),
$$
for some function $F,$ then this leads to
\begin{subequations}  \label{eq:1sgp3fn}
\begin{align}
z &= F(x) \\
w &= e^{k_1 t} F'(x) y, \label{eq:1sgp3fn2}
\end{align}
\end{subequations}
and we thus recover the well-known equivalence transformation \cite{forsyth, schw-bk, schw-pap} of
\eqref{eq:e3nor}. Therefore, the
condition $g=0$ is the necessary and sufficient condition for the
vector $V$ in \eqref{eq:V3ode} to represent the infinitesimal
generator of $G,$ and hence we have
\begin{subequations}
\begin{align}\label{eq:fnlgen3}
V^0 &= \set{f, (k_1+ f')y} \equiv \set{\xi^0, \eta^0} \\
X^1 &= \set{f, (k_1+ f')y, -2 (a^1 f' + f'''), - \left( 3 a^0 f' + a^1 f'' + f^{(4)}\right)} \\
X^2 &= \set{0, g, 0, \frac{-1}{y} \left(  a^0 g + a^1 g' + g^{(3)}
\right)}, \label{eq:x2e3nor}
\end{align}
\end{subequations}
where $X^1= X_{ \vert_{g=0}},$ and $X^2= X- X^1.$ The same as we set
$X=\set{\xi, \eta, \phi},$ and $X^1= \set{\xi^0, \eta^0, \phi^0},$
for any given equation of the form \eqref{eq:main}, we also set
$X^2=\set{\xi^c, \eta^c, \phi^c},$ and in the actual case we have
\begin{align*}
\phi^0 &= \set{ -2 (a^1 f' + f'''), - \left( 3 a^0 f' + a^1 f'' +
f^{(4)}\right)}\\
\xi^c &= 0, \qquad \eta^c= g,
\intertext{ and }
\phi^c &= \set{0, \frac{-1}{y} \left(  a^0 g + a^1 g' + g^{(3)}
 \right)}. \\
\end{align*}
 Consequently, it appears that in the case of \eqref{eq:e3nor}, we
have a very simple linear relation of the form
\begin{equation}\label{eq:lin1}
\xi= \xi^0+ \xi^c, \qquad  \eta= \eta^0 + \eta^c, \qquad \phi=
\phi^0 + \phi^c,
\end{equation}
which shows that the functions $\xi^0, \eta^0,$ and $\phi^0$
representing the  components of the infinitesimal generators $V^0$
of $G$ or $X^0$ of $G_c$ depend linearly on those of $X.$ More
importantly, the set $\Omega$ of necessary and sufficient conditions
to be imposed on $X$ to obtain $X^1$ was reduced in this case to
setting $g=0.$ We would like to generalize this to a general linear
ODE.\par

The components $X^1$ and $X^2$ of $X$ that we've just exhibited also
have very specific algebraic properties. Since $X^1$ depends on $f$
and $k_1$ while $X^2$ depends on $g,$ we set
\begin{subequations}\label{eq:x1x2}
\begin{align}
X^1 (f, k_1) &= \set{f, (k_1+ f')y, -2 (a^1 f' + f'''), - \left( 3 a^0 f' + a^1 f'' + f^{(4)}\right)} \\
X^2  (g )&= \set{0, g, 0, \frac{-1}{y} \left(  a^0 g + a^1 g' +
g^{(3)} \right)},
\end{align}
\end{subequations}
for any arbitrary functions $f$ and $g$ and arbitrary constant
$k_1.$ Let $L_0, L_1$ and $L_2$ be the vector spaces generated by
$X^1(f,0), X^1(f,k_1)$ and $X^2 (g),$ respectively. Let
$$L_{S,0} = L_0 \dot{+} L_2$$
be the subspace of the Lie algebra $L_S=L_1 \dot{+} L_2$ of $G_S.$ We note that
$L_{S,0}$ is obtained from $L_S$ simply by setting $k_1=0$ in the
generator $X^1(f, k_1)$ of $G_S,$ which according to
\eqref{eq:1sgp3fn2} amounts to ignoring the constant factor $\lambda
= e^{k_1 t}$ in the transformation of the dependent variable under
$G.$  Moreover, we have $\dim L_{S,0}= \dim L_S -1,$ while $L_S$
itself is infinite dimensional in general.
\begin{thm}\label{th:levi3nor} $\mbox{$\quad$}$
\begin{enumerate}
\item[(a)] The vector spaces $L_0, L_1$ and $L_2$ are all Lie
subalgebras of $L_S.$
\item[(b)] $L_0$ and $L_2$ are the components of the Levi
decomposition of the Lie algebra $L_{S,0},$ that is
\begin{equation}\label{eq:leviLS}
 \qquad [L_0, L_2] \subset L_2,
\end{equation}
and  $L_2$ is a solvable ideal while $L_0$ is semisimple.
\end{enumerate}
\end{thm}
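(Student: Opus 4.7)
The plan is to establish everything by direct Lie-bracket computations in the coordinate system $(x, y, a^1, a^0)$, using the explicit expressions for $X^1(f, k_1)$ and $X^2(g)$ from \eqref{eq:x1x2}. For part~(a), I would treat the three subalgebras in order of increasing complexity. First, since $X^2(g)$ has no $\pd_x$- or $\pd_{a^1}$-component and $g=g(x)$, the $\pd_y$-component of $[X^2(g_1), X^2(g_2)]$ is immediately zero; in the $\pd_{a^0}$-component the two surviving cross terms are symmetric in $(g_1, g_2)$ and cancel, so $L_2$ is abelian. Next, for $L_0$, I would verify component-by-component that $[X^1(f_1, 0), X^1(f_2, 0)] = X^1(F, 0)$ with $F := f_1 f_2' - f_2 f_1'$; the $\pd_x$- and $\pd_y$-parts are immediate, while the $\pd_{a^1}$- and $\pd_{a^0}$-parts reduce to Leibniz identities involving derivatives of $f_1, f_2$ up to order four. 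Finally, for $L_1$, it suffices to show $[X^1(f,0),\, y\,\pd_y] = 0$: this follows by inspection since the $\pd_{a^j}$-coefficients of $X^1(f,0)$ are $y$-independent and its $\pd_y$-coefficient is linear in $y$. Bilinearity then reduces every bracket in $L_1$ to one already handled in $L_0$.

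For part~(b), the central step is to compute $[X^1(f, 0), X^2(g)]$ and show that it equals $X^2(fg' - f'g)$. The $\pd_x$- and $\pd_{a^1}$-components vanish trivially, the $\pd_y$-component yields $fg' - f'g$ at once, and in the $\pd_{a^0}$-component the Leibniz-rule expansion collapses into the form $-(a^0 h + a^1 h' + h''')/y$ with $h := fg' - f'g$ after collecting by powers of $a^0$ and $a^1$. This gives $[L_0, L_2] \subset L_2$; combined with $[L_2, L_2] = 0$ from part~(a), $L_2$ becomes an ideal of $L_{S,0}$, and it is solvable since it is abelian. For the semisimplicity of $L_0$, I would use the bracket formula $[X^1(f_1, 0), X^1(f_2, 0)] = X^1(f_1 f_2' - f_2 f_1', 0)$ already established: the assignment $X^1(f, 0) \mapsto f\,\pd_x$ is then a Lie-algebra isomorphism from $L_0$ onto the algebra of vector fields $f\,\pd_x$ on the line, which is known to contain no non-trivial (in particular no non-trivial solvable) ideal.

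The principal obstacle is the $\pd_{a^0}$-component bookkeeping in $[X^1(f_1, 0), X^1(f_2, 0)]$ and $[X^1(f, 0), X^2(g)]$: each involves a sum of products of derivatives of orders up to four, weighted by factors of $a^0$, $a^1$, and $1/y$, that must be matched term-by-term against the target expression. The key algebraic identities are Wronskian-type relations such as $F''' = f_1 f_2^{(4)} - f_2 f_1^{(4)} + 2(f_1' f_2''' - f_2' f_1''')$ for $F = f_1 f_2' - f_2 f_1'$, together with the analogous expansion of $h'''$, which one verifies by differentiating three times and then groups terms to force the desired collapse. Once these identities are in hand, the remaining manipulations are purely linear bookkeeping.
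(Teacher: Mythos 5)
Your proposal is correct and takes essentially the same approach as the paper: both rest on the direct computation of the commutation relations $[X^1(f_1,\cdot),X^1(f_2,\cdot)]=X^1(f_1f_2'-f_2f_1',0)$, $[X^2(g_1),X^2(g_2)]=0$, $[X^1(f,\cdot),X^2(g)]\in L_2$, from which the subalgebra, ideal and solvability statements follow at once. The only difference is the last step: where you cite the simplicity of the algebra of vector fields $f\,\partial_x$ on the line as known, the paper proves it directly by observing that for a nonzero $H$ the equation $-fH'+f'H=h$ is solvable for $f$ for every $h$, so the ideal generated by any nonzero $X^1(H,0)$ is all of $L_0$.
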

\begin{proof}
A computation of the commutation relations of the vector fields
shows that
\begin{subequations}\label{eq:lsbrac}
\begin{align}
\left[ X^1(f_1, k_1), X^1(f_2, k_2) \right] &= X^1 (-f_2 f_1'+ f_1 f_2', 0) \label{eq:x1brac}\\
\left[ X^2(g_1), X^2(g_2)  \right] &= 0  \label{eq:x2brac}\\
\left[ X^1(f_1, k_1), X^2(g_1)\right] &= X^2\left(f_1 g_1' - g_1(k_1
+f_1')\right),  \label{eq:x12brac}
\end{align}
\end{subequations}
where the $f_j, g_j$ are arbitrary functions, while the $k_j$ are
arbitrary constants. Consequently, it readily follows from
\eqref{eq:x1brac} and \eqref{eq:x2brac} that $L_1$ and $L_2$ are Lie
subalgebras of $L_S,$ while setting $k_1= k_2=0$ in
\eqref{eq:x1brac} shows that $L_0$ is also a Lie subalgebra, and
this proves the first part of the Theorem. Moreover, it clearly
follows from \eqref{eq:x1brac} and \eqref{eq:x12brac} that $L_{S,0}$
is an ideal in $L_S,$  while \eqref{eq:x2brac} and
\eqref{eq:x12brac} show that $L_2$ is an abelian ideal in $L_S$, and
in particular in $L_{S,0}.$ Thus we are only left with showing that
$L_0$ is an semisimple subalgebra of $L_{S,0}.$ If $L_0$ had a
proper ideal $A,$ then for a given nonzero operator $X^1(H, 0)$ in
$A,$ all operators $X^1(-f H' + f' H,0)$ would be in $A$ for all
possible functions $f.$ However, since for every function $h$ of $x$
the equation
$$
-f H' + f' H=h
$$
admits a solution in $f,$ it follows that $A$ would be equal to
$L_0.$ This contradiction shows that $L_0$ has no proper ideal, and
it is therefore a simple subalgebra of $L_{S,0}.$
\end{proof}

   Note that part (b) of Theorem \ref{th:levi3nor} can also be interpreted as stating that up to a constant
factor, $X^1$ and $X^2$ generate the components of the Levi
decomposition of $L_S.$ Although we have stated the results of this
theorem  only for the general linear third order equation
\eqref{eq:e3nor} in its normal reduced form, these results can
certainly be extended to the general linear ODE
\begin{equation}\label{eq:glinode}
y^{(n)} + a^{n-1} y^{(n-1)} + a^{n-2} y^{(n-2)}+ \dots + a^0 y=0
\end{equation}
of an arbitrary order $n \geq 3.$  We first note that if we write
the infinitesimal generator $X$ of the symmetry group $G_S$ of this
equation in the form
$$
X= \set{\xi, \eta, \phi} \equiv \xi \pd_x + \eta \pd_y + \phi \pd_A,
$$
where $A= \set{ a^{n-1}, a^{n-2}, \dots, a^0}$ is the set of all
arbitrary functions, then on account of the linearity of the
equation, we must have
\begin{equation}\label{eq:eta}
\eta = h \, y + g
\end{equation}
for some arbitrary functions $h$ and $g.$ Now, let again $X^1=
\set{\xi^0, \eta^0, \phi^0}$ and $X^2$ be given by
\begin{equation}\label{eq:x1x2}
X^1 = X_{\vert_{g=0}}, \qquad X^2= X-X^1,
\end{equation}
 and set $X^0= \set{\xi^0, \phi^0}.$  We have shown in
another recent paper \cite{ndogpla} that $X^0$ thus obtained using $g=0$  as the minimum
 set of conditions is the infinitesimal generator of the group
$G_c$ for $n=3,4,5.$ This should certainly also hold for the linear
equation  \eqref{eq:glinode} of a general order, and we thus propose
the following.
\begin{conj}
For the general linear {\textrm  ODE} \eqref{eq:glinode}, $X^0= \set{\xi,
\phi}_{\vert_{g=0}}$  is the infinitesimal generator of $G_c,$ where $X= \set{\xi, \eta, \phi}$ is the generator of $G_S.$

\comment{let $X^1$ and $X^2$ be given by \eqref{eq:x1x2}, in which
the function $g$ is given by \eqref{eq:eta}, and set $X^1=
\set{\xi^0, \eta^0, \phi^0}.$ Let $L_1$ and $L_2$ be the vector
spaces generated by $X^1$ and $X^2,$ respectively.
\begin{enumerate}
\itemsep= 1mm
\item[(a)] $L_1$ is a simple subalgebra of $L_S$ and $L_2$ is the
radical of $L_S,$ so that the sum
$$
L_S= L_1 \dot{+} L_2
$$
represents the Levi decomposition of $L_S.$
\item[(b)]
\end{enumerate}
}
\end{conj}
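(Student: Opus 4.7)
The plan is to reduce the conjecture, via Theorem \ref{th:GinGS} and Lemma \ref{le:phi0}, to a single structural statement about the finite equivalence transformations of \eqref{eq:glinode}: every such transformation has the form
\begin{equation*}
z = F(x), \qquad w = \sigma(x)\, y,
\end{equation*}
with $F$ a local diffeomorphism of $x$ and $\sigma(x)$ a nonvanishing function of $x$ alone. This is the classical Forsyth form. Once it is granted, differentiating a one-parameter family of such transformations at the identity gives the infinitesimal generator of $G$ as $V^0 = \set{\xi^0(x),\ h(x)\, y}$, whose $y$-component contains no $y$-independent summand. Combined with the decomposition \eqref{eq:eta} for the generator of $G_S$, this identifies $\Omega = \set{g = 0}$ as the minimum set of conditions reducing $V$ to $V^0$, so that $\set{\xi, \phi}_{\vert_{g=0}} = \set{\xi^0, \phi^0}$ is precisely the vector which, by Lemma \ref{le:phi0}, generates the induced action \eqref{eq:Gci} of $G_c$.

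To prove the structural statement I would work infinitesimally rather than integrate the one-parameter subgroup ODEs, as was done for $n = 3$ in Section \ref{s:3rdode}. Write a candidate equivalence generator as $V = \xi(x,y)\pd_x + \eta(x,y)\pd_y$, apply its $n$-th prolongation to the left-hand side of \eqref{eq:glinode}, and equate the result, reduced modulo the equation, to zero. The resulting determining expression is a polynomial in the derivatives $y', \dots, y^{(n-1)}$ with coefficients depending on $x, y$ and the $a^i$. Its highest-weight coefficient forces $\xi_y = 0$, so that $\xi = \xi(x)$; the homogeneity of \eqref{eq:glinode} then forces $\eta_{yy} = 0$ and $\eta(x, 0) = 0$, giving $\eta = h(x)\, y$. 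These two conditions are the infinitesimal counterparts of $z = F(x)$ and $w = \sigma(x) y$, and they are equivalent to setting $g = 0$ in the linearity decomposition \eqref{eq:eta}.

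With $\Omega = \set{g = 0}$ established, the conjecture follows: the vector $X^1 = X_{\vert_{g = 0}}$ carries exactly the equivalence information inside $X$, and by the argument used for the second part of Theorem \ref{th:GinGS}, its $(\xi, \phi)$-projection is the infinitesimal counterpart of the induced $G_c$-action \eqref{eq:Gci}, which is what the conjecture asserts. The main obstacle is the infinitesimal characterization step; its difficulty is essentially combinatorial, since for large $n$ the $n$-th prolongation produces many terms. However, only the coefficients of the lowest few powers of the $y^{(j)}$ are needed to pin down the forms of $\xi$ and $\eta$, so the structural features of the argument are uniform in $n$, and the argument should extend without substantive modification from the cases $n = 3, 4, 5$ verified in \cite{ndogpla} to all $n \geq 3$.
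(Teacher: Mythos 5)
First, note that the paper itself offers no proof of this statement: it is stated as a conjecture, supported only by the explicit computations for $n=3,4,5$ reported in \cite{ndogpla} and by the worked case \eqref{eq:e3nor}. So your proposal has to be judged as a proof attempt on its own, and as it stands it has two genuine gaps. The more serious one is your determining condition. You propose to take $V=\xi(x,y)\pd_x+\eta(x,y)\pd_y$, prolong it, and ``equate the result, reduced modulo the equation, to zero.'' That is the symmetry condition for a \emph{fixed} member of the family, not the condition for an equivalence transformation. For a fixed generic linear ODE of order $n\geq 3$ it yields essentially $\xi=0$ and $\eta=k\,y+s(x)$ with $s$ a solution of that member: it does not force $\eta(x,0)=0$, and it is far too restrictive on $\xi$, so it can produce neither the full equivalence algebra nor the conclusion $g=0$. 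The correct infinitesimal criterion must let the coefficients move inside the family: the prolonged action of $V$ on $y^{(n)}+\sum_i a^i y^{(i)}$ must equal $\sum_i \chi^i(x)\,y^{(i)}$ modulo the equation for some functions $\chi^i$ of $x$ alone, and this must hold identically in the arbitrary $a^i$ (equivalently, one works in the extended space $(x,y,a)$ as the paper does). It is precisely this ``for every member'' quantifier that kills the affine part: the inhomogeneity $g^{(n)}+\sum_i a^i g^{(i)}$ cannot vanish for all choices of the $a^i$ unless $g=0$, while arbitrary $f$ and $h$ survive. Your sketch never states this quantifier, and with the condition as written the structural step ``$\xi_y=0$, $\eta_{yy}=0$, $\eta(x,0)=0$'' is not justified.

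The second gap is that even after fixing the criterion, the substance of the conjecture is not addressed but re-asserted. The claims that the relevant coefficients of the prolongation behave ``uniformly in $n$'' and that the argument ``should extend without substantive modification'' from $n=3,4,5$ to all $n$ are exactly the content of the conjecture, not an argument for it. Two routes would close this. Either carry out the prolongation bookkeeping for general $n$ (an induction on the order, isolating the coefficients of $y^{(n-1)}$ and of the $y$-free and $y$-linear terms), or import the classical St\"ackel--Forsyth theorem that every point transformation preserving linearity and homogeneity of an $n$-th order equation is $z=F(x)$, $w=\sigma(x)y$ — citable, and it settles the form of $G$. But even then you still need, for arbitrary $n$, the form of the generic $G_S$ generator: that $\xi$ depends on $x$ alone, that $\eta$ splits as in \eqref{eq:eta} with $h,g$ functions of $x$, and that $\phi\vert_{g=0}$ is free of $y$; only with that in hand does the minimal-conditions theorem of \cite{ndogftc}, together with Theorem \ref{th:GinGS} and Lemma \ref{le:phi0}, deliver that $\set{\xi,\phi}_{\vert_{g=0}}$ generates the induced action \eqref{eq:Gci} of $G_c$. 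Your overall architecture — identify $\Omega=\set{g=0}$ by matching the projection $V$ against the known generators of $G$, then invoke \cite{ndogftc} — is the right one and matches the strategy the paper itself suggests; what is missing is the actual general-$n$ derivation that the conjecture is asking for.
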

It has been proved \cite{ndogftc} that for any family $\mathcal{F}$ of
(linear or nonlinear) differential equations of any order in which
the arbitrary functions depend on the independent variables alone,
if $X^1= \set{\xi^0, \eta^0, \phi^0}$ is obtained by setting $X^1=
X_{\vert_\Omega}$ for some set $\Omega$ of minimum conditions that
reduce $V=\set{\xi, \eta}$ into a generator of $G,$ then $X^0=
\set{\xi^0, \phi^0}$ is the generator of $G_c.$ However, the
difficulty lies in finding the set $\Omega$ of minimum conditions,
and we have proved that for \eqref{eq:e3nor}, $\Omega$ is given by
$\set{g=0},$ and extended this as a conjecture for a general linear
homogeneous ODE. \par
Moreover, calculations done for equations of low order up to five
suggest that all subalgebras appearing in Theorem \ref{th:levi3nor}
can also be defined in a similar way for the general linear equation
\eqref{eq:glinode}, and that all the results of the theorem also
holds for this general equation.\par

We now wish to pay some attention to the converse of part (a) of
Theorem \ref{th:GinGS} which states that for any given family
$\mathcal{F}$ of differential equations, the group $G$ can be viewed
as a subgroup of $G_S.$ From the proof of that theorem it appears
that the symmetry group $G_S$ is much larger in general, because
there are symmetry transformations that do not arise from Type I
equivalence transformations. A simple example of such a symmetry is
given by the term $X^2$ appearing in \eqref{eq:x2e3nor}, of the
symmetry generator of \eqref{eq:e3nor}. Indeed, by construction its
projection $X^{2,0}= \set{0,g}$ in the $(x,y)$-space does not match
any particular form of the generic infinitesimal generator $V^0=
\set{f, (k_1 + f')y}$ of $G,$ where $f$ is an arbitrary function and
$k_1$ an arbitrary constant.\par

Nevertheless, although \eqref{eq:e3nor} gives an example in which
the inclusion $G \subset G_S$ is strict, there are equations for
which the two groups are isomorphic. Such an equation is given by
the nonhomogeneous version of \eqref{eq:e3nor} which may be put in
the form
\begin{equation}\label{eq:e3nh}
y^{(3)} + a^1 (x) y' + a^0 (x) y + r(x)=0,
\end{equation}
where $r$ is also an arbitrary function, in addition to $a^1$ and
$a^0.$ The linearity of this equation forces its equivalence
transformations to be of the form
\begin{equation}\label{eq:eqv3nh0}
x= f(z), \qquad y = h(z) w + g(z),
\end{equation}
and the latter change of variables transforms \eqref{eq:e3nh} into
an  equation of the form
$$
w''' + B_2 w'' + B_1 w' + B_0 w + B_{-1}=0,
$$
where the $B_j,\;$ for $j=-1, \dots,2$ are functions of $z$ and
$$
B_2= 3 \left( \frac{h'}{h} - \frac{f''}{f'}\right).
$$
The required vanishing of $B_2$ shows that the necessary and
sufficient condition for \eqref{eq:eqv3nh0} to represent an
equivalence transformation of \eqref{eq:e3nh} is to have $h= \lambda
f'$ for some arbitrary  constant $\lambda.$ The equivalence
transformations of \eqref{eq:e3nh} are therefore given by
\begin{equation}\label{eq:eqv3nh}
x= f(z), \qquad y= \lambda f'(z) w + g(z).
\end{equation}
On the other hand, the generator $X$ of the symmetry group $G_S$ of
the nonhomogeneous equation \eqref{eq:e3nh} in the coordinates system $(x,y, a^1, a^0, r)$ is
found to be of the form
\begin{subequations}\label{eq:xe3nh}
\begin{align}
X &= \set{J, (k_1 + J')y + P, -2 \left( a^1 J' + J'''\right), C_3,
\phi^4}\\
\intertext{ where }
C_3 &=   \frac{-1}{y} \left(  a^0 P+\phi^4+ 2 r J'-r k_1+ a^1
P'+P^{(3)}\right) - \left(3a^0  J' +a^1  J'' + J^{(4)} \right)
\end{align}
\end{subequations}
and where $J$ and $P$ are arbitrary functions of $x$ and $k_1$ is an
arbitrary constant, while $\phi^4$ is an arbitrary function of $x,y,
a^1, a^0$ and $r.$ Thus $X$ has projection $V= \set{J, (k_1+ J')y +
P}$ on $(x,y)$-space and this is exactly the infinitesimal
transformation of \eqref{eq:eqv3nh}. Consequently, the minimum set
$\Omega$ of conditions to be imposed on $V$ to reduce it into the
infinitesimal generator $V^0=\set{\xi^0, \eta^0}$ of $G$ is void in
this case, and hence
\begin{equation}\label{eq:x=x1}
X= X^1= \set{\xi^0, \eta^0, \phi^0}.
\end{equation}
It thus follows from Lemma \ref{le:phi0} that the finite
transformations associated  with $X$ are given precisely by
\eqref{eq:eqv3nh}, together with the  corresponding induced
transformations of the arbitrary functions $a^1, a^0$ and $r.$
Consequently, to each symmetry transformation $X$ in $G_S,$ there
corresponds a  unique equivalence transformation in $G,$ and vice-versa. We
have thus proved the following results.
\begin{pro}
For the nonhomogeneous equation \eqref{eq:e3nh}, the groups $G$ and
$G_S$ are isomorphic.
\end{pro}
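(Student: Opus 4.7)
The plan is to use Theorem \ref{th:GinGS}(a) to get the injection $G \hookrightarrow G_S$ for free, and then show that this injection is already surjective for the specific equation \eqref{eq:e3nh}. In other words, one must verify that every symmetry of the augmented system (treating $a^1, a^0, r$ as dependent variables) arises from a genuine Type I equivalence transformation, so that the $X^2$-type symmetries that caused strict inclusion in the homogeneous case \eqref{eq:e3nor} are absent here.

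First I would compute the equivalence group $G$ of \eqref{eq:e3nh} directly. Linearity of the equation in $y$ and its derivatives forces any Type I transformation to take the affine form $x = f(z)$, $y = h(z) w + g(z)$. Substituting into \eqref{eq:e3nh} and demanding that the transformed equation remain in the normal reduced form (vanishing coefficient $B_2$ of $w''$) yields $h'/h = f''/f'$, hence $h = \lambda f'$ for an arbitrary constant $\lambda$. This identifies $G$ with the family \eqref{eq:eqv3nh}, parametrized by $(f, g, \lambda)$.

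Next I would compute the infinitesimal generator $X$ of $G_S$ by running the standard Lie algorithm with $a^1, a^0, r$ adjoined as dependent variables; the expected output is \eqref{eq:xe3nh}, so the projection onto $(x,y)$-space is $V = \{J,\, (k_1 + J')y + P\}$ with $J, P$ arbitrary functions of $x$ and $k_1$ an arbitrary constant. The crux of the argument is then the direct comparison: this $V$ is precisely the infinitesimal form of \eqref{eq:eqv3nh}, with $J \leftrightarrow f$, $P \leftrightarrow g$, $k_1 \leftrightarrow \lambda$. Hence the set $\Omega$ of minimum conditions needed to reduce $V$ to a generator of $G$ is empty, and therefore $X = X^1 = \{\xi^0, \eta^0, \phi^0\}$, as recorded in \eqref{eq:x=x1}.

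To finish, I would invoke Lemma \ref{le:phi0}: since $X = X^1$, the action of $X$ on the arbitrary functions $a^1, a^0, r$ must coincide with the induced action \eqref{eq:Gci} of $G_c$ determined by \eqref{eq:eqv3nh}. Thus each element of $G_S$ is uniquely the $G_S$-lift of a unique element of $G$, so the natural injection from Theorem \ref{th:GinGS}(a) is a group isomorphism. The main obstacle is entirely the projection-matching step: if the $\phi^4$ component of $X$ in \eqref{eq:xe3nh} were to introduce extra free functional parameters beyond $(J, P, k_1)$ at the level of the $(x,y)$-projection, surjectivity would fail, exactly as it does for the homogeneous equation through the $X^2(g)$ summand. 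Verifying that no such extra parameter survives in the projection is what makes the nonhomogeneous term $r(x)$ decisive for the isomorphism.
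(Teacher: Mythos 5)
Your proposal is correct and follows essentially the same route as the paper: compute $G$ via the affine ansatz \eqref{eq:eqv3nh0} and the vanishing of $B_2$ to get \eqref{eq:eqv3nh}, compute $X$ as in \eqref{eq:xe3nh}, observe that the projection $V=\set{J,(k_1+J')y+P}$ already matches the infinitesimal form of \eqref{eq:eqv3nh} so that $\Omega$ is void and $X=X^1$, and then apply Lemma \ref{le:phi0} to conclude the bijective correspondence. No gaps; this is the paper's own argument.
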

  This proposition should certainly also hold for the nonhomogeneous
version of the general linear equation \eqref{eq:glinode} of an
arbitrary order $n.$ In such cases, invariants  of the differential
equation are determined  simply by searching the symmetry generator
$X$ of $G_S,$ which must satisfy \eqref{eq:x=x1}, and then solving
the resulting system of linear first order partial differential
equations (PDEs) resulting from the determining equation of the form
\begin{equation}\label{eq:deteq1}
X^{0, m} \cdot F=0,
\end{equation}
where $X^{0,m}$ is the generator $X^0= \set{\xi^0, \phi^0}$ of $G_c$
prolonged to the desired order $m$ of the unknown invariants
$F.$\par
\section*{Concluding Remarks}

  Because Type I equivalence group $G$ can be identified with a
  subgroup of type II equivalence group $G_S,$ every function
  invariant under $G_S$ must be invariant under $G,$ and hence $G$
  has much more invariant functions than $G_S,$ and functions
  invariant under $G$ are naturally much easier to find than those
  invariant under $G_S.$ If we consider for instance the third order
linear equation  \eqref{eq:e3nor}, it is well known \cite{forsyth}
that its first nontrivial invariant function  is given by the third
order differential invariant
\begin{equation}\label{eq:inve3nor3}
\Psi= - \frac{4(9 a^1 \mu^2 + 7 \mu'^2 - 6 \mu \mu'')^3}{\mu^8},
\end{equation}
where $\mu(x)= -2a^0+ {a^1}',$ while at order four \cite{ndogpla} it
has two differential invariants,
\begin{align*}
\Psi_1 &= \Psi \\
\begin{split} \Psi_2 &=  \frac{-1}{18 \mu^4} \left( 216 a^{\!0 ^{\,4}} - 324 a^{\! 0^{\,3}}\, {a^1}' + 18 \, a^{\!0^{\,2}} (9 {a^1}'{\,^2}+ 2 a^1
\mu') + + 9 \mu^2 \mu^{(3)}\right) +\\
 & \quad \frac{-1}{18 \mu^4} \left(\mu' (28 \mu'\,^2 + 9 {a^1}'(a^1\, {a^1}' - 4 \mu'')) - 9 a^0(3 {a^1}'^{\,3}+ 4 a^1 \, {a^1}' \mu'- 8 \mu' \mu'') \right).\end{split}
\end{align*}

It can be verified on the other hand that $G_S$ has no nontrivial
differential invariants up to the order four.

%%%%%%%%%%%%%%%%%%%%%%%%%%%%%%%%%%%%%%%%%%%%%%%%%%%%%%%%%%%%%%%%%%%%%%%%%%%
%%%%%%%%%%%%%%%%%%%%%%%%%%%%
%%%%%%%%%%%%%%%%%%%%%%%%%%%%%%%%%%%%%%%%%%%%%%%%%%%%%%%%%%%%%%%%%%%%%%%%%%%

%\begin{acknowledgments}

%\end{acknowledgments}

%%%%%%%%%%%%%%%%%%%%%%%%%%%%%%%%%%%%%%%%%%%%%%%%%%%%%%%%%%%%%%%%%%%%%%%%%%%
%%%%%%%%%%%%%%%%%%%%%%%%%%%%
%%%%%%%%%%%%%%%%%%%%%%%%%%%%%%%%%%%%%%%%%%%%%%%%%%%%%%%%%%%%%%%%%%%%%%%%%%%

\end{document}